\newcommand{\nui}[1]{N(#1)}
\newtheorem{fed}{Definition}[section]
\newtheorem*{fed*}{Definition}
\newtheorem*{feds*}{Definitions}
\newtheorem{teo}[fed]{Theorem}
\newtheorem*{teo*}{Theorem}
\newtheorem{pro}[fed]{Proposition}
\theoremstyle{definition}
\newtheorem{rem}[fed]{Remark}
\newtheorem*{rems*}{Remarks}
\newtheorem{exa}[fed]{Example}
\def\coma{\, , \, }
\def\py{\peso{and}}
\newcommand{\peso}[1]{ \quad \text{ #1 } \quad }
\def\n0{n_{ \text{\rm \tiny o}}}
\def\suml{\sum\limits}
\def\bce{\begin{center}}
\def\ece{\end{center}}
\def\cD{\mathcal D}
\def\rk{\text{\rm rk}}
\def\noi{\noindent}
\def\cF{\mathcal F}
\def\cG{\mathcal G}
\def\EOE{\hfill $\triangle$}
\def\uno{\mathds{1}}
\def\bm{\left[\begin{array}}
\def\em{\end{array}\right]}
\def\ben{\begin{enumerate}}
\def\een{\end{enumerate}}
\def\bit{\begin{itemize}}
\def\eit{\end{itemize}}
\def\barr{\begin{array}}
\def\earr{\end{array}}
\def\la{\lambda}
\def\al{\alpha}
\def\N{\mathbb{N}}
\def\R{\mathbb{R}}
\def\C{\mathbb{C}}
\def\I{\mathbb{I}}
\def\cH{\mathcal{H}}
\def\cK{\mathcal{K}}
\def\cM{{\cal M}}
\def\ca{\mathbf{a}}
\def\bb{{\mathbf b}}
\def\da{^\downarrow}
 \DeclareMathOperator{\tr}{tr}
\DeclareMathOperator{\cc}{\mbox{co}}
\newcommand{\mat}{\mathcal{M}_d(\mathbb{C})}
\newcommand{\matsad}{\mathcal{H}(d)}
\newcommand{\matud}{\mathcal{U}(d)}
\newcommand{\matpos}{\mat^+}
\def\beq{\begin{equation}}
\def\eeq{\end{equation}}
\def\pausa{\medskip\noi}
\newcommand{\paren}[1]{\left(#1\right)}
\newcommand{\llav}[1]{\left\{#1\right\}}
\newcommand{\abs}[1]{\left|#1\right|}
\newcommand{\norm}[1]{\left\|#1\right\|}
\newcommand{\normdos}[1]{\left\|#1\right\|_2^2}
\newcommand{\aldd}{\cD(\alpha\coma \dd)}
\def\cc{\mathbf{c}}
\def\ca{\mathbf{a}}
\def\C{\mathbb{C}}
\def\I{\mathbb{I}}
\def\N{\mathbb{N}}
\def\R{\mathbb{R}}
\def\bb{{\mathbf b}}
\def\cc{{\mathbf c}}
\def\dd{{\mathbf d}}
\def\cD{\mathcal{D}}
\def\cF{\mathcal{F}}
\def\cG{\mathcal{G}}
\def\cH{\mathcal{H}}
\def\cK{\mathcal{K}}
\def\cM{{\cal M}}
\def\la{\lambda}
\def\al{\alpha}
\def\uno{\mathds{1}}
\def\ca{\mathbf{a}}
\def\EOE{\hfill $\triangle$}
\def\bdem{\begin{proof}}
	\def\edem{\end{proof}}
\def\beq{\begin{equation}}
\def\eeq{\end{equation}}
\def\bm{\left[\begin{array}}
	\def\em{\end{array}\right]}
\def\ben{\begin{enumerate}}
	\def\een{\end{enumerate}}
\def\bit{\begin{itemize}}
	\def\eit{\end{itemize}}
\def\barr{\begin{array}}
	\def\earr{\end{array}}
\def\bce{\begin{center}}
	\def\ece{\end{center}}
\def\noi{\noindent} 
\def\pausa{\medskip\noi}
\def\ua{^\uparrow}
\def\da{^\downarrow}
\def\coma{\, , \, }
\def\suml{\sum\limits}
\def\rk{\text{\rm rk}}
\begin{document}

\title{Best multi-valued approximants via multi-designs}
\author{Mar\'\i a Jos\'e Benac$\,^{a,b}$, Noelia Belén Rios$\,^{c,d}$, Mariano Ruiz$\,^{c,d}$\footnote{Partially supported by CONICET
(PIP 00954CO - 2022), UNLP (11X829), UNSE (23/C190-PIP-2022) 
 e-mail addresses: mjbenac@gmail.com, nbrios@mate.unlp.edu.ar, mruiz@mate.unlp.edu.ar}
}
\date{\vspace{.4cm} {\small $^a$Dto. Acad\'emico de Matem\'atica, FCEyT-UNSE, Santiago del Estero, Argentina \\ 
\vspace{.2cm}
$^b$Instituto de Recursos H\'\i dricos - FCEyT- CONICET, Santiago del Estero, Argentina\\ 
\vspace{.2cm}
$^c$Centro de Matemática de La Plata, FCE-UNLP, La Plata, Argentina\\ \vspace{.2cm}
$^d$IAM-CONICET, Buenos Aires, Argentina}}
\maketitle

\begin{abstract}
	Let  ${\mathbf d} =(d_j)_{j\in\mathbb{I}_m}\in \mathbb{N}^m$ be a decreasing finite sequence of positive integers, and let $\alpha=(\alpha_i)_{i\in\mathbb{I}_n}$ be a finite and non-increasing sequence of positive weights. Given a family  
	$\Phi^0=(\mathcal{F}_j^0)_{j\in\mathbb{I}_m}$  of Bessel sequences with $\mathcal{F}_j^0=\{f_{i,j}^0\}_{i\in \mathbb{I}_k}\in (\mathbb{C}^{d_j})^k$ for each $1\leq j\leq m$, our main purpose on this work is to characterize the best approximants of the $m$-tuple of frame operators of the elements of $\Phi^0$ in the set $D(\alpha,\mathbf d)$ of the so-called $(\alpha,\mathbf d)$-designs, which are the 
	$m$-tuples
	$\Phi=(\mathcal{F}_j)_{j\in\mathbb{I}_m}$ such that each $\mathcal{F}_j=\{f_{i,j}\}_{i\in\mathbb{I}_n}$ is a finite sequence in $\mathbb{C}^{d_j}$, 
	and 
	$\sum_{j\in\mathbb{I}_m}\|f_{i,j}\|^2=\alpha_i$ for $i\in\mathbb{I}_n$.
	Specifically, in this work we completely characterize the  minimizers of the Joint Frame Operator Distance (JFOD) function:
	$\Theta:D(\alpha,\mathbf d)\to \mathbb{R}_{\geq 0} $  given by
	$$\Theta(\Phi)=\sum_{j=1}^m \| S_{\mathcal{F}_j} - S_{\mathcal{F}^0_j}\|_2^2 \,,$$
	where $S_{\mathcal{F}}$ denotes the frame operator of $\mathcal{F}$ and $\|\cdot\|_2$ is the Frobenius norm.
	Indeed, we show that local minimizers of $\Theta$ are also global and we obtain an algorithm to construct the optimal $(\alpha,\mathbf d)$-desings. As an application of the main result, in the particular case that $m=1$, we also characterize global minimizers of a G-frames problem recently considered by He, Leng and Xu.
\end{abstract}
\noindent  AMS subject classification: 42C15, 15A60.

\noindent Keywords: Frames, frames completions, proximity problems, majorization.

\section{Introduction}
Motivated by many applications in matrix theory, matrix approximation problems (or matrix nearness problems) have been studied for several years. There are many books and papers in the literature that deal with different variants of these problems, see for example \cite{gower} and \cite{Higham} for a more detailed discussion of the subject and references.

\pausa Let $\cM_{n\times d}(\C)$ be the space of complex matrices of size $n\times d$. Given a non empty subset $\mathcal{X}$ of $\cM_{n\times d}(\C)$ and $A\in \cM_{n\times d}(\C)$, a usual matrix nearness problem is to compute 
$$\delta= \min_{X\in \mathcal{X}} \, N(A-X)\,,$$
where $N(\cdot)$ is a unitary invariant norm, that is,  $N(UAW)=N(A)$ for every $A\in \cM_{n\times d}$ and every pair of unitary matrices $U\in \cM_n(\C)$ and $W\in \cM_d(\C)$. Typically, the matrix norm used is the Frobenius norm: $\|A\|_2=\tr(A^*A)$ which has some desirable properties.

\pausa If such a distance can be calculated, a natural issue that arises is to characterize the set of best approximants, that is the set
$$\mathcal{X}^{op}=\{X\in \mathcal{X} \, :\, N(A-X)=\delta\}.$$

\pausa Finite frame theory provided many of such matrix approximation problems (or Procustes type problems) related to frame designs.  Given a finite dimensional complex Hilbert space $\mathcal{H}$, a frame $\cF$ for $\mathcal{H}$ is simply a generating set of vectors of $\mathcal{H}$. Associated to a frame $\cF$, there is a positive definite bounded linear operator $S_\cF$ of $\mathcal{H}$, called {\it frame operator},   that allows to perform encoding-decoding schemes. For practical reasons sometimes it is useful to find frames with some structure whose frame operators are ``close'' to some definite positive operator $A$. These kind of approximation problems were considered by some of the authors in \cite{CMR} and \cite{MRiS}, in which they were attacked with various tools of matrix analysis, such as the Schur-Horn theorem or Lidskii inequalities. These results are also related to  optimal designs of frames with specific predetermined characteristics obtained by minimizing some convex potentials on  sets of frames (see \cite{MRS13}, \cite{mrs2},\cite{mrs3}).

\pausa  In \cite{MRiS1}, the authors solved completely a conjecture posed by N. Strawn in \cite{Strawn} related to an approximation problem. Given a  $S\in \matpos$ and a fixed finite sequence of positive weights $\alpha=(\alpha_i)_{i\in\I_m}$, N. Strawn considered the following setting: let
$\cD(\alpha,d)$ denote the finite sequences $\cF=\{f_i\}_{i\in\I_n}\in(\C^d)^n$ such that $\|f_i\|^2=\alpha_i$, $i\in\I_n$ . Consider in  $\cD(\alpha,d)$  the product metric (i.e. the metric as a subset of $(\C^d)^n)$; let $\Theta:\cD(\alpha,d)\rightarrow \R_{\geq 0}$, be given by $\Theta(\cF)=\|S-S_\cF\|_2$, where $\|\cdot\|_2$ denotes the Frobenius norm. Strawn conjectured that local minimizers of $\Theta$ where actually global minimizers.  This assertion becomes relevant in applied situations in which numerical methods based  on gradient descent or alternating projections methods are used to obtain local minimizers of $\Theta$
, \cite{liu}.

\pausa In \cite{MRiS1}, Strawn's conjeture was settled in the affirmative and the spectral and geometrical structures of the minimizers of the function $\Theta$ defined above (called the {\it frame operator distance}) were explicitly computed.  

\pausa In this work we consider a natural extension of the previous problem to a simultaneous approximation problem. Now, given a positive integer $m$, we consider the sets of $m$-tuples 
 $\Phi=(\cF_j)_{j\in \I_m}$,  such that each $\cF_j=\{f_{ij}\}_{i\in \I_n}$ is a sequence in $\C^{d_j}$ such that
$$\sum_{j\in  \I_m} \|f_{ij}\|^2=\alpha_i.$$
These $m$- tuples are called $(\alpha,\dd)$-designs. Then, given a fixed sequence of positive operators $\{S_j\}$, we consider the function
$$\Theta(\Phi)=\sum_{j=1}^m \| S_j - S_{\cF_j}\|_2^2 \,,$$
\noindent which measure the joint frame operator distance between  $\{S_j\}_{j\in \I_m}$ and the frame operators  $\{S_{\cF_j}\}_{j\in \I_m}$.

\pausa  The problem we consider in this work is to find the  $(\alpha,\dd)$-designs $\Phi^{\rm op}$ that minimize $\Theta$, which result in the best simultaneous approximation of
$S_j$, for $1\leq j\leq m$.
Moreover, 
since  the set of $(\alpha,\dd)$-designs can be endowed with a natural (product) metric, we also consider the study of the spectral and geometric structure of the local minimizers of $\Theta$ in this set.
Notice that the particular case $m=1$ 
represent Strawn's problem  described above.

\pausa The case  $m>1$ is original and correspond to a natural extension of Strawn's problem, so it is forseeable that similar techniques allow us to find an spectral and geometric characterization of the local and global minimizers of $\Theta$ in the set of $(\al,\dd)$-designs. Specifically, we solve the multivalue Strawn's problem through a translation of the multi-completion problem given in \cite{BMRS2},  which means that the minimum in $\Theta$ are attained in the $(\alpha,\dd)$-designs that minimize the joint convex potential for a suitable multi-completion problem.

\pausa These notes are organized as follows. In Section 2 we include some preliminaries about matrix analysis and $(\alpha,\dd)$-designs. In Section 3 we prove the main result, that local minimizers of $\Theta$ are global, and we obtain an spectral characterization of this minimizers. In Section 4 we present an algorithm to find (effectively) the best approximants among $(\alpha, \dd)$-designs. Finally, in Section 5, we apply the $m=1$ case to an approximation problem for G-frames (see for example \cite{sun0} and \cite{Sun}), considered in \cite{MLX}. This allows us to fully describe the minimizers for the distance problem considered and to suggest an algorithm that will allow us to construct the optimal G-frames.

\section{Preliminaries and notation}

In this Section we recall the notion of $(\alpha\coma \dd)$-design, the multi-completions and the main problems 
considered in \cite{BMRS2}, that plays a key role in our work. Next, we describe some basic notation and notions 
used throughout the rest of the paper.

\pausa
We let  $\mat$ for the algebra of $d\times d$ complex matrices. We denote by $\matsad\subset \mat$ the real subspace of selfadjoint matrices and by $\matpos\subset \matsad$ the cone of positive semidefinite matrices. We let $\matud\subset \mat$ denote the group of unitary matrices.
For $d\in\N$, let $\I_d=\{1,\ldots,d\}$ and let $\uno_d=(1)_{i\in\I_d}\in\R^d$ be the vector with all its entries equal to $1$.

\pausa
Given $x=(x_i)_{i\in\I_d}\in\R^d$ we denote by $x\da=(x_i\da)_{i\in\I_d}$ (respectively $x\ua=(x_i\ua)_{i\in\I_d}$) 
the vector obtained by rearranging the entries of $x$ in non-increasing (respectively non-decreasing) order. We 
denote by $(\R^d)\da=\{x\da:\ x\in\R^d\}$, $(\R_{\geq 0}^d)\da=\{x\da:\ x\in\R_{\geq 0}^d\}$ and analogously for $(\R^d)\ua$ and $(\R_{\geq 0}^d)\ua$. 

\pausa
We also denote by $I_d \in \cM_d(\C)$ the identity matrix. Given $S\in\mat$ we let $R(S)\subset\C^d$ denote the range (or image) of $S$ and $\rk(S)$ denote the rank of $S$, i.e. the dimension of $R(S)$. 
Given a matrix $A\in\matsad$ we denote by $\la(A)=\la\da(A)=(\la_i(A))_{i\in\I_d}\in (\R^d)\da$ the eigenvalues of $A$ counting multiplicities and arranged in non-increasing order, and by $\la\ua(A)$ the same vector  but arranged in non-decreasing order. On the other hand, 
we denote by $\sigma(A)\subset \R$ its spectrum, i.e. the set of eigenvalues of $A$. 
If $x,\,y\in\C^d$ we denote by $x\otimes y\in\mat$ the rank-one matrix given by $(x\otimes y) \, z= \langle z\coma y\rangle \ x$, for $z\in\C^d$.
\pausa

\subsection{Finite frames}
\pausa Given a finite sequence $\cF=\{f_i\}_{i\in\I_n}$ in $\C^d$, $S_\cF\in \matpos$  (a {\it Bessel sequence} using frame terminology) will denote the
{\it frame operator} of $\cF$, which is given by 
$${S_{\cF}}\,  f=\sum_{i\in\I_n} \langle f,f_i\rangle
f_i=\sum_{i\in\I_n} (f_i\otimes f_i)\, f \peso{for} f\in\C^d\,.$$

\noi If 
 there exists a constant $a>0$ such that
\beq\label{frame}
a\norm{f}^2\leq\sum_{i\in\I_n} \abs{\langle f,f_i\rangle}^2
\peso{for all} f\in\C^d\,.
\eeq
we say that $\cF$ is a {\it frame} for $\C^d$. This condition is equivalent to say that $\cF$ spans $\C^d$
or that $S_\cF$ is a positive
invertible operator acting on $\C^d$.

\pausa Recall now the notion of majorization between real vectors, which is a partial pre-order relation in $\R^d$ that arises naturally in matrix analysis, and that will play a central role throughout our work.
Let  $x,y\in\R^d$. We say that $x$ is
{\it submajorized} by $y$, and write $x\prec_w y$,  if
$$
\suml_{i=1}^j x^\downarrow _i\leq \suml_{i=1}^j y^\downarrow _i \peso{for every} 1\leq j\leq d\,.
$$
If $x\prec_w y$ and $\tr x = \sum_{i\in \I_d}x_i=\sum_{i\in \I_d} y_i = \tr y$,  then $x$ is
{\it majorized} by $y$, and write $x\prec y$. In addition, we say that $x$ is {\it strictly} majorized by $y$ if 
$x\prec y$ and $x\da  \neq y\da$.

\noi For convenience, we extend the definition to allow comparing vectors of positive entries and different sizes, if $x\in \R_{\geq 0}^k$ and $y\in \R_{\geq 0}^d$, we note $x\prec_w y$ if
$$
\suml_{i=1}^j x^\downarrow _i\leq \suml_{i=1}^j y^\downarrow _i \peso{for every} 1\leq j\leq \min\{k,d\}\,.
$$
and $x$ is majorized by $y$ if $x\prec_w y$ and $\sum_{i\in \I_k} x_i=\sum_{i\in \I_d} y_i$.

\noi In several applications of finite frame theory, it is important to construct families
$\cF=\{f_i\}_{i\in\I_k}\in (\C^d)^k$ in such a way that the frame operator $S_\cF$ and the squared norms $(\|f_i\|^2)_{i\in\I_k}$
are prescribed in advance. This problem is known as the frame design problem, and its solution can be obtained in 
terms of the Schur-Horn theorem for majorization.
\begin{teo}[See \cite{Illi}]\label{teo SH para marcos}
	Let $S\in \matpos$ and let $\ca=(a_i)_{i\in\I_k}\in (\R_{>0}^k)$. Then, the following statements are equivalent:
	\ben
	\item There exists $\cF=\{f_i\}_{i\in\I_k}\in (\C^d)^k$ such 
	that $S_\cF=S$ and $\|f_i\|^2=a_i\,$, for $i\in\I_k\,$;
	\item $\ca\prec \la(S)$.	
	\qed\een
\end{teo}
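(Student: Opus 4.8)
The plan is to translate the frame-theoretic statement into a statement about Gram matrices and then invoke the classical Schur--Horn theorem for Hermitian matrices. Given a finite sequence $\cF=\{f_i\}_{i\in\I_k}\in(\C^d)^k$, I would associate to it its \emph{synthesis operator} $T_\cF\colon\C^k\to\C^d$ determined by $T_\cF e_i=f_i$, where $\{e_i\}_{i\in\I_k}$ is the canonical basis of $\C^k$. Then $S_\cF=T_\cF T_\cF^*\in\matpos$, while the \emph{Gram matrix} $G_\cF=T_\cF^*T_\cF\in\cM_k(\C)$ is positive semidefinite with entries $(G_\cF)_{ij}=\langle f_j,f_i\rangle$; in particular its diagonal is exactly $(\|f_i\|^2)_{i\in\I_k}$. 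The algebraic fact I will use repeatedly is that $T_\cF T_\cF^*$ and $T_\cF^* T_\cF$ share the same non-zero eigenvalues counted with multiplicity, so the eigenvalue list of $G_\cF$ is obtained from $\la(S_\cF)$ only by adjusting the number of trailing zeros to account for the size difference between $k$ and $d$.

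For $(1)\Rightarrow(2)$, assume such an $\cF$ exists with $S_\cF=S$ and $\|f_i\|^2=a_i$. Then $G_\cF$ is a $k\times k$ positive semidefinite matrix with diagonal $\ca$ and eigenvalue list equal to $\la(S)$ padded with zeros. The classical Schur--Horn theorem asserts that the diagonal of a Hermitian matrix is majorized by its eigenvalues, so $\ca\prec\la(G_\cF)$ in $\R^k$. Since $\la(G_\cF)$ and $\la(S)$ differ only in their trailing zeros, all the partial sums $\suml_{i=1}^j a_i\da$ and the total sum coincide with those computed against $\la(S)$, which yields exactly $\ca\prec\la(S)$ in the extended sense defined above.

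For $(2)\Rightarrow(1)$, assume $\ca\prec\la(S)$. First I would record a rank constraint: if $k<d$, evaluating the submajorization inequality at $j=k$ together with the equality of total sums forces $\la_{k+1}(S)=\cdots=\la_d(S)=0$, so $\rk(S)\le k$; this is exactly what is needed for $S$ to be realizable as a sum of $k$ rank-one operators. Let $\mu\in(\R_{\ge0}^k)\da$ carry the same non-zero entries as $\la(S)$, padded or truncated to length $k$; the hypothesis then gives the honest majorization $\ca\prec\mu$ in $\R^k$. By the converse direction of the classical Schur--Horn theorem there exists a positive semidefinite $G\in\cM_k(\C)$ with eigenvalue list $\mu$ and diagonal $\ca$. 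Finally, since $G$ and $S$ are positive semidefinite with the same non-zero spectrum, I would produce $T=W_r\,\Sigma\, V_r^*\in\cM_{d\times k}(\C)$, where $W_r,V_r$ collect the eigenvectors of $S,G$ for the common non-zero eigenvalues and $\Sigma$ is the diagonal of the corresponding square roots; a direct computation gives $T^*T=G$ and $TT^*=S$. Setting $f_i=Te_i$ then yields $S_\cF=TT^*=S$ and $\|f_i\|^2=(T^*T)_{ii}=G_{ii}=a_i$, as required.

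The routine parts are the spectral bookkeeping and the factorization $T=W_r\Sigma V_r^*$. The genuine input, and the step I expect to carry the real content, is the classical Schur--Horn theorem together with its converse (the existence of a Hermitian matrix with prescribed eigenvalues and majorized diagonal); everything else is a dictionary between frames, synthesis operators, and Gram matrices. The only delicate point is the careful reconciliation of the two different vector lengths $k$ and $d$ through the extended notion of majorization, which is precisely what makes both the rank argument and the zero-padding comparison go through cleanly.
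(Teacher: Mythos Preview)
The paper does not give its own proof of this theorem: it is stated with a terminal \qed and attributed to \cite{Illi}, so there is no in-paper argument to compare against. Your proposal is correct and is essentially the standard route to this result (indeed the one underlying the cited reference): pass to the Gram matrix $G_\cF=T_\cF^*T_\cF$, use that $T_\cF T_\cF^*$ and $T_\cF^*T_\cF$ share nonzero spectrum, and then apply the classical Schur--Horn theorem in both directions. Your handling of the size mismatch between $k$ and $d$ via the paper's extended majorization convention is accurate, including the observation that $k<d$ together with $\ca\prec\la(S)$ forces $\rk(S)\le k$. The only cosmetic point is that your factorization $T=W_r\Sigma V_r^*$ could be phrased more simply as ``choose any $T$ with $T^*T=G$ and $TT^*=S$'', which exists precisely because $G$ and $S$ are positive semidefinite with the same nonzero spectrum; but what you wrote is fine.
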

\subsection{Preliminaries on $(\alpha\coma\dd)$-designs}
Given a $m$-tuple of natural numbers $\dd=(d_1,d_2,\ldots,d_m)$, arranged in a non-increasing order, a $\dd$-design  is any  family of Bessel sequences:
$$\Phi=\{\cF_j\}_{j\in \I_m},$$
such that each $\cF_j=\{f_{i,j}\}_{i\in \I_k}$ is a Bessel sequence for $\C^{d_j}$. 

\pausa
Our interest is to consider $\dd$-designs with some restriction on the sizes of the vectors on the Bessel sequences.

\pausa
 Namely, let $\alpha=(\alpha_1,\alpha_2,\ldots,\alpha_n)\in (\R_{>0}^n)$ be a sequence of weights. Then, an $(\alpha,\dd)$-design $\Phi=\{\cF_j\}_{j\in \I_m}$ is a $\dd$-design such that
	$$\suml_{j\in\I_m}\| f_{ij}\|^2=\alpha_i, \, \text{ for }\, i\in\I_n\,.
	$$ 

\pausa	
The set of all $(\alpha\coma \dd)$-designs shall be	denoted by $\mathcal D(\alpha\coma \dd)$. Also, with the aim to simplify some calculations, we assume that the weights are arranged in a non-increasing order.

\pausa
Notice that, if $m=1$ and $\dd=d$, $(\alpha\coma \dd)$-designs generalize the notion of the structured Bessel sequences for $\C^d$ with prescribed norms given by $\alpha$. That is, those Bessel sequences $\cF=\{f_i\}_{i\in \I_n}$ whose vectors lie in the $\alpha$-torus
\beq\label{eq defi Balfad0}
\mathcal B_{\al\coma d}=\{ \cF=\{f_i\}_{i\in\I_n}\in (\C^d)^n:\ \|f_i\|^2=\al_i \, , \ i\in\I_n\}\, .
\eeq

 \pausa 
In \cite{BMRS2}, the authors studied the problem of finding $(\alpha, \dd)$-designs  $\Phi^{op}=\{\cF^{op}_j\}_{j\in \I_m}$ that complete an initial $\dd$-design $\Phi^0=\{\cF^0_j\}_{j\in \I_m}$ in an optimal sense.

\pausa
In what follows, we will detail this multi-completion problem and the results obtained in \cite{BMRS2}, which will be useful in the next Section.

\pausa
Consider an $n$-tuple $\alpha=(\alpha_i)_{i\in\I_n}\in \R_{>0}^n$, arranged in a non-increasing order and  let $\dd =(d_j)_{j\in\I_m}\in(\N^m)\da$ be such that $d_1\leq n$ (this last condition is to assure that the optimal completions are frames for their respective spaces).

\pausa
We shall consider the set of $(\alpha, \dd)$-designs, $\mathcal D(\alpha\coma \dd)$, endowed with the metric
\[m(\Phi,\Phi')=\sum_{j\in \I_m}\ \left(\, \sum_{i\in\I_n}\|f_{i,j}-f_{i,j}'\|^2\,\right )^{1/2}\,,\]
where $\Phi, \Phi'\in \mathcal D(\alpha\coma \dd)$. 

\pausa	
We set a fixed $\dd$-design $\Phi^0=(\cF_j^0)_{j\in\I_m}$. 
 The goal is to find and to  characterize optimal (multi) completions of $\Phi^0$ among the $(\alpha, \dd)$-designs. That completion is obtained by appending to each $\cF_j^0$ the vectors of the respective Bessel sequence in the $(\alpha, \dd)$-design. 

\pausa
Here, the optimality is measured in terms of (joint) Benedetto-Fickus potential ${\rm P}$ of the multi-completions. That is, the goal is to find the local minimizers of the function $\Psi:\cD(\alpha\coma \dd)\rightarrow \R_{\geq 0}$, given by 
\begin{equation}\label{potencial conjunto}
\Psi (\Phi)={\rm P}(\Phi^0\coma \Phi) =\sum_{j\in\I_m} \tr(S^2_{(\cF_j^0\coma\cF_j)} )
=\sum_{j\in\I_m}\sum_{i\in\I_{d_j}} \la^2_i(S_{(\cF_j^0\coma\cF_j)})
 \, ,
 \end{equation}
where
$S_{(\cF_j^0\coma\cF_j)}=S_{\cF_j^0}+S_{\cF_j}$ denotes the frame operator of the sequence $(\cF_j^0\coma\cF_j)\in (\C^{d_j})^{k+n}$, for $j\in\I_m$ and the metric in $\cD(\alpha \coma \dd)$ is induced by the distance defined above.

\pausa
Now we are able to present a summarized version of the main result of \cite{BMRS2} that shall be useful in the sequel.

\pausa
First, let  $\la_j=(\la_{i,j})_{i\in\I_{d_j}}=\la\ua(S_{\cF_j^{0}})\in (\R^{d_j}_{\geq 0})\ua$, for $j\in\I_m$ be the vectors of eigenvalues (arranged in a non-decreasing order) of each frame operator $S_{\cF_j^0}$.

\pausa

\begin{teo}[\cite{BMRS2}]\label{teo: prelims opt multicomp}
There exist vectors $\nu_j\in \R^{d_j}$ such that, 
for $\tilde \Phi\in \cD(\alpha\coma \dd)$, we have
$$\tilde\Phi  \ \text{ is  a local minimizer of } \ \Psi \ \text{ on }\  \cD(\alpha\coma \dd)\quad \iff\quad \lambda(S_{(\cF^0_j\coma \tilde \cF_j)})=\nu_j\da\,.$$
Moreover,
\ben 
\item $\tilde \Phi$ is a global minimizer of $\Psi$.
\item For $j\in\I_m$, $S_{\tilde \cF_j}$ commutes with $S_{\cF_j^0}$ and $S_{\tilde \cF_j}+ S_{\cF_j^0}$ is invertible. In particular, $(\cF^0_j\coma \tilde \cF_j)$ is a frame for $\C^{d_j}$.
\een
\end{teo}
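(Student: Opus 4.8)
The plan is to turn the minimization of $\Psi$ on the non-convex manifold $\cD(\alpha\coma\dd)$ into a finite-dimensional convex program over spectral data, extract from it a \emph{unique} family of optimal completed spectra $\nu_j\da$, and then transport the convex picture back to the manifold by a perturbation analysis. First I would encode each design $\Phi$ by its matrix of squared norms $\beta=(\beta_{ij})$ with $\beta_{ij}=\|f_{ij}\|^2$; the $(\alpha\coma\dd)$-design condition is exactly $\sum_{j\in\I_m}\beta_{ij}=\alpha_i$ with $\beta_{ij}\ge 0$, which describes a compact convex polytope. Since $\Psi(\Phi)=\sum_{j\in\I_m}\|\la(S_{\cF_j^0}+S_{\cF_j})\|_2^2$ only sees the spectra of the completed operators, I would fix $\beta$ and, for each $j$ separately, describe the set of attainable spectra $\mu_j=\la(S_{\cF_j^0}+S_{\cF_j})$ as $\cF_j$ ranges over sequences with $\|f_{ij}\|^2=\beta_{ij}$. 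This is precisely the single Bessel-sequence completion problem, whose feasible spectra are cut out by Schur--Horn type majorization relations involving $\la(S_{\cF_j^0})$ and $\beta_{\cdot j}$ (an extension of Theorem~\ref{teo SH para marcos}). Because $\mu\mapsto\|\mu\|_2^2$ is strictly convex and Schur-convex, for fixed $\beta$ the minimum of $\Psi$ over each component is attained at the majorization-least feasible spectrum, and the resulting optimal value is a convex function of $\beta$.

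Next I would minimize this reduced functional over the polytope of admissible $\beta$. The coupling $\sum_{j}\beta_{ij}=\alpha_i$ makes the problem genuinely joint but still convex, and strict convexity of the sum of squares forces the optimal completed spectra $\nu_j$ to be unique, even if the optimal $\beta$ is not. This yields the lower bound $\Psi(\Phi)\ge\sum_{j\in\I_m}\|\nu_j\|_2^2$ valid on all of $\cD(\alpha\coma\dd)$, with equality exactly when $\la(S_{(\cF_j^0\coma\cF_j)})=\nu_j\da$ for every $j$. In particular this settles the ``$\Leftarrow$'' implication and, once the converse is known, item~$1$: any design meeting the spectral condition attains the global minimum, so it is simultaneously a global and a local minimizer.

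The delicate point is the ``$\Rightarrow$'' implication, that a merely \emph{local} minimizer $\tilde\Phi$ already satisfies $\la(S_{(\cF_j^0\coma\tilde\cF_j)})=\nu_j\da$. I would argue by contraposition and construct explicit deformations inside $\cD(\alpha\coma\dd)$ along which $\Psi$ strictly decreases whenever the spectra are non-optimal. Two kinds of first-order moves must be combined: rotations of the vectors within a single $\cF_j$ at fixed $\beta$, which steer $\mu_j$ toward its majorization-minimal value and push $S_{\tilde\cF_j}$ toward being simultaneously diagonalizable with $S_{\cF_j^0}$; and transfers of norm mass between two components $j$ and $j'$ keeping $\sum_j\beta_{ij}=\alpha_i$ fixed, which drive $\beta$ toward the optimal face of the polytope. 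Estimating the first-order effect of these moves on the eigenvalues through Lidskii's inequalities and first-order spectral perturbation, and verifying that at every non-optimal configuration at least one admissible move strictly decreases $\Psi$, is the main obstacle: this is where the majorization constraints and the convex program have to be matched quantitatively, and where one must rule out spurious critical points that are not global.

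Finally, item~$2$ is read off from the explicit form of the optimizers. Since $\nu_j$ is the majorization-least attainable spectrum, it must coincide with the Lidskii lower bound $\la\da(S_{\cF_j^0})+\la\ua(S_{\tilde\cF_j})$, and this bound is attained only when $S_{\tilde\cF_j}$ and $S_{\cF_j^0}$ are simultaneously diagonalizable with oppositely ordered eigenvalues; this forces the commutation. Invertibility of $S_{\cF_j^0}+S_{\tilde\cF_j}$ then reduces to strict positivity of every entry of $\nu_j$, which holds because the hypothesis $d_1\le n$ ensures that each completing sequence has enough vectors ($n\ge d_j$) to render the completed frame operator invertible, so $(\cF_j^0\coma\tilde\cF_j)$ is a frame for $\C^{d_j}$.
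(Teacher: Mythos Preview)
This theorem is not proved in the paper: it is quoted from \cite{BMRS2} as a preliminary result (the header carries the citation, and the text introduces it as ``a summarized version of the main result of \cite{BMRS2}''), so there is no in-paper argument to compare your proposal against.

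That said, your outline is a plausible roadmap for how such a result is established, and it is in the spirit of the approach in \cite{BMRS2}: reduce to spectral data via Schur--Horn/majorization, solve a convex problem to identify the unique optimal spectra $\nu_j$, and then handle the local $\Rightarrow$ global implication by perturbation. Two points deserve caution. First, the hard direction is exactly where you say it is, and your sketch of ``rotations plus mass transfers'' is only a plan; the actual argument in \cite{BMRS2} proceeds by showing directly that at any local minimizer the vectors $\tilde f_{ij}$ are eigenvectors of $S_{\cF_j^0}+S_{\tilde\cF_j}$ (see the paragraph after the theorem), which is stronger than commutation and is what drives both the spectral identification and item~2. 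Your Lidskii-equality route to commutation is not quite right as stated: attaining the majorization lower bound $\la\da(A)+\la\ua(B)\prec\la(A+B)$ does not by itself force $AB=BA$ without additional structure, so you would still need the eigenvector-level information. Second, your justification of invertibility (``$n\ge d_j$ gives enough vectors'') is insufficient on its own; having $n\ge d_j$ vectors does not make an arbitrary completion invertible. Invertibility is a property of the \emph{optimal} spectrum $\nu_j$, and in \cite{BMRS2} it comes from the explicit description $\nu_j=\max\{(\cc)_{d_j},\la_j\}$ with $\cc>0$, which in turn relies on $d_1\le n$ and $\alpha_i>0$.
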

\pausa
The results proved in \cite{BMRS2} state a stronger feature fulfilling local minima: the frame operators $S_{\tilde \cF_j}$ not only commute with $S_{\cF_j^0}+S_{\tilde \cF_j}$ but also the vectors  $\tilde \cF_j=\{\tilde f_{ij}\}$ are eigenvectors of $S_{\cF_j^0}+S_{\tilde \cF_j}$.  As a consequence, $\tilde \cF_j$ decomposes into mutually orthogonal sets of vectors for each $j\in \I_m$. 

\pausa 
These results allow to describe the spectra $\nu_j$ as $\nu_j=\max \{\cc\coma \la_j\}$ (entry-wise maximum) where $\cc\in (\R^{d_1})^\downarrow$ is a vector constructed from the data $\alpha$ and $\la_j$, $j\in \I_m$.

\section{Local minimizers for the joint frame operator distance (JFOD)}
In this section we will present a simultaneous approximation problem for Bessel sequences that generalizes previous results shown in \cite{MRiS1}. Taking as a starting point a $\dd$-design $\Phi^0=\{\cF_j^0\}_{j\in \I_m}$ the goal is to characterize $(\alpha,\dd)$-designs that are local minimizers for some distance function defined on the frame operators. The approach is similar to the one developed in \cite{MRiS1}: it reduces to finding the local (global) minima of a suitable joint convex potential for the $(\alpha,\dd)$-design  problem  described in the previous section for a particular case of initial data.

\pausa
Given two $\dd$-designs $\Phi^1=\{\cF_j^1\}_{j\in \I_m}$ and $\Phi^2=\{\cF_j^2\}_{j\in \I_m}$, whose sequences of frame operators are $\Sigma_{\Phi^1}=\{S_{\cF_j^1}\}_{j\in \I_m}$ and $\Sigma_{\Phi^2}=\{S_{\cF_j^2}\}_{j\in \I_m}$, respectively, we define their joint frame operator distance (JFOD)  as follows:
\[\text{dist}_{JFOD}(\Sigma_{\Phi^1}\coma \Sigma_{\Phi^2})=\left( \sum_{j\in \I_m}\|S_{\cF_j^1}-S_{\cF_j^2}\|_2^2\right)^{\frac{1}{2}}\]
where the norm $\|\cdot\|_2$ is the Frobenius norm.

\pausa
Let $\Phi^0=\{\cF_j^0\}_{j\in \I_m}$ be a (fixed) $\dd$-design and consider a set of weights $\alpha$ as in previous section. As it was announced, our objective is to characterize those $(\alpha\coma \dd)$-designs that best approximate $\Phi^0$ in terms of the JFOD.

\pausa
In order to properly pose the problem to study, we define the function to minimize:

\begin{fed}
	\label{defi gen fod}\rm Let $\Phi^0=\{\cF_j^0\}_{j\in \I_m}$ be a $\dd$-design for $m\in \mathbb{N}$ and $\al=(\al_i)_{i\in\I_n}\in(\R_{>0}^n)\da$. Consider the  function  
	$$\Theta:\aldd\to \R_{\geq 0} \peso{given by} \Theta(\Phi)=\text{dist}^2_{JFOD}(\Sigma_{\Phi^0}\coma \Sigma_{\Phi})= \sum_{j\in \I_m}\|S_{\cF_j^0}-S_{\cF_j}\|_2^2, $$
for $\Phi=\{\cF_j\}_{j\in \I_m} \in \cD(\alpha\coma \dd)$.  
\end{fed}
\bigskip

\pausa
Next proposition shows that we can restate the JFOD problem as a multi-completion problem,  as in the case $m=1$ studied in \cite{MRiS1}. 

\pausa
\begin{rem}\label{traslacion}
Given an initial (fixed) $\dd$-design $\Phi^0=\{\cF_j^0\}_{j\in \I_m}$, with frame operators $\Sigma_{\Phi^0}=\{S_{\cF_j^0}\}$, take $M=\max_{j\in \I_m} \|S_{\cF_j^0}\|$ and choose any $\dd$-design $\tilde \Phi^0=\{\tilde \cF_j^0\}_{j\in \I_m}$ such as $\Sigma_{\tilde \Phi^0}=\{M\cdot I_{d_j}-S_{\cF_j^0}\}_{j\in \I_m}$. It is clear that such a $\dd$-design exists since $M\cdot I_{d_j} -S_{\cF_j^0}\in \cM_{d_j}^+(\C)$ for every $j\in \I_m$.
\end{rem}
\begin{pro}\label{traduction}
	Consider $\Phi^0$, $\tilde \Phi^0$ as in Remark \ref{traslacion}. Then,  
$\Phi=\{\cF_j\}_{j\in \I_m} \in \aldd$ is a local minimizer of $\Theta$  if and only if it is a local minimizer of $\Psi$ in $\aldd$, given  the initial data $\tilde \Phi$,  $\al$ and the strictly convex function $\varphi(t)=t^2$.
\end{pro}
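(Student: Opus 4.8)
The plan is to prove that on the domain $\aldd$ the functions $\Theta$ and $\Psi$ (the joint Benedetto--Fickus potential associated to the initial data $\tilde\Phi^0$ and $\varphi(t)=t^2$) differ only by an additive constant. Since both are defined on the same metric space $\cD(\alpha\coma\dd)$ endowed with the product metric $m(\cdot\coma\cdot)$, an identity of the form $\Psi=\Theta+C$ with $C$ independent of $\Phi$ forces the two functions to have \emph{exactly} the same local minimizers (and, a fortiori, the same global minimizers and the same sublevel sets). Thus the whole proof reduces to establishing this single algebraic identity.

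First I would unfold the definition of $\Psi$. By \eqref{potencial conjunto}, for $\Phi=\{\cF_j\}_{j\in\I_m}\in\aldd$ one has $\Psi(\Phi)=\suml_{j\in\I_m}\tr\big(S_{(\tilde\cF_j^0\coma\cF_j)}^2\big)$, where $S_{(\tilde\cF_j^0\coma\cF_j)}=S_{\tilde\cF_j^0}+S_{\cF_j}$ by additivity of frame operators under concatenation. The key step is to use the defining property of $\tilde\Phi^0$ from Remark \ref{traslacion}, namely $S_{\tilde\cF_j^0}=M\,I_{d_j}-S_{\cF_j^0}$, to rewrite
$$S_{(\tilde\cF_j^0\coma\cF_j)}=M\,I_{d_j}-\big(S_{\cF_j^0}-S_{\cF_j}\big).$$
Setting $T_j=S_{\cF_j^0}-S_{\cF_j}$ (a selfadjoint matrix) and expanding the square under the trace, I would obtain $\tr\big(S_{(\tilde\cF_j^0\coma\cF_j)}^2\big)=M^2 d_j-2M\,\tr(T_j)+\tr(T_j^2)$, with $\tr(T_j^2)=\|S_{\cF_j^0}-S_{\cF_j}\|_2^2$; summing over $j$, the last term reproduces $\Theta(\Phi)$ exactly.

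The step requiring genuine care is checking that the remaining terms are constant on $\aldd$, and this is where the hypotheses truly enter. The terms $M^2 d_j$ are manifestly independent of $\Phi$. For the linear term, write $\suml_{j\in\I_m}\tr(T_j)=\suml_{j\in\I_m}\tr(S_{\cF_j^0})-\suml_{j\in\I_m}\tr(S_{\cF_j})$; the first sum depends only on the fixed $\Phi^0$, and the second collapses to a constant precisely because $\Phi$ is an $(\alpha\coma\dd)$-design:
$$\suml_{j\in\I_m}\tr(S_{\cF_j})=\suml_{j\in\I_m}\suml_{i\in\I_n}\|f_{ij}\|^2=\suml_{i\in\I_n}\,\suml_{j\in\I_m}\|f_{ij}\|^2=\suml_{i\in\I_n}\alpha_i.$$
Hence $\Psi(\Phi)=\Theta(\Phi)+C$ with $C=M^2\suml_{j\in\I_m}d_j-2M\big(\suml_{j\in\I_m}\tr(S_{\cF_j^0})-\suml_{i\in\I_n}\alpha_i\big)$ independent of $\Phi$.

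Finally, from $\Psi-\Theta\equiv C$ on $\aldd$ I would conclude that $\Phi$ is a local minimizer of $\Theta$ if and only if it is a local minimizer of $\Psi$ with initial data $\tilde\Phi^0$, $\alpha$ and $\varphi(t)=t^2$, as claimed. I do not expect a serious obstacle here: the argument is essentially bookkeeping, the one delicate point being the correct invocation of the $(\alpha\coma\dd)$-design constraint to annihilate the linear trace term. This is exactly what converts the Frobenius JFOD functional into a completion potential and thereby makes Theorem \ref{teo: prelims opt multicomp} applicable in the next step.
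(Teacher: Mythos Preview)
Your proof is correct and follows essentially the same approach as the paper: both establish the identity $\Theta(\Phi)=\Psi(\Phi)+\text{constant}$ on $\aldd$ by expanding the square under the trace after writing $S_{\cF_j^0}-S_{\cF_j}=M\,I_{d_j}-(S_{\tilde\cF_j^0}+S_{\cF_j})$. Your version is in fact slightly more careful than the paper's, since you explicitly invoke the $(\alpha\coma\dd)$-design constraint $\sum_{j\in\I_m}\tr(S_{\cF_j})=\sum_{i\in\I_n}\alpha_i$ to justify why the linear trace term is constant, whereas the paper simply asserts that the remaining term ``denotes a constant'' without spelling this out.
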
  
\bdem

Given $\Phi=\{\cF_j\}_{j\in \I_m} \in \cD(\alpha\coma \dd)$ and $M$ as in Remark \ref{traslacion},
$$S_{\cF_j^0}- S_{\cF_j}=M\cdot I_{d_j}+(S_{\cF_j^0}-M\cdot I_{d_j})-S_{\cF_j}=M\cdot I_{d_j}-(S_{\tilde \cF_j^0}+ S_{\cF_j}),$$
thus,
\begin{eqnarray*}
\Theta(\Phi)&=&\sum_{j\in \I_m} \| S_{\cF_j^0} - S_{\cF_j}\|_2^2
=\sum_{j\in \I_m}\tr([M\cdot I_{d_j}-(S_{\tilde \cF_j^0}+ S_{\cF_j}) ]^2)\\
&=&\sum_{j\in \I_m}[M^2 d_j- 2M\tr(S_{ \tilde \cF_j^0}+ S_{\cF_j})]+\sum_{j\in \I_m}\tr([S_{\tilde \cF_j^0}+ S_{\cF_j}] ^2)\\
&=&
k+\sum_{j\in \I_m}\tr([S_{\tilde \cF_j^0}+ S_{\cF_j}]^2)=k+\Psi(\Phi)\,.
\end{eqnarray*}
Where  $k=\sum_{j\in \I_m} M^2 d_j- 2M\tr(S_{\tilde \cF_j^0}+ S_{\cF_j})$ denotes a constant.
Hence $\Theta(\Phi)=\Psi(\Phi)+k$ for every $\Phi \in \cD(\alpha\coma \dd)$. In particular, local minimizers of
$\Theta$ and $\Psi$ (with their respective initial data) must coincide. 
\edem

\pausa
As a consequence, the complete characterization of local (global) minimizers for $\Theta$ can be carried out by the results summarized in Theorem \ref{teo: prelims opt multicomp}:

\pausa
Let $\tilde{\nu}_j$, $j\in \I_m$, denote the  spectra  of $S_{(\tilde \cF_j^0 \coma \cF_j)}$, where $\Phi=\{\cF_j\}_{j\in \I_m}$ is a local (global) minimizer of $\Psi$, for the initial data given by the $\dd$-design $\tilde \Phi^0$ constructed from $\Phi^0$ as it was described in  Remark \ref{traslacion}. Notice that, if we denote by $\la_j=\la\da (S_{\cF_j^0})$ the vector of eigenvalues of $S_{\cF_j^0}$ arranged in non-increasing order, then  the eigenvalues $\tilde{\la}_j=\la\ua(S_{\tilde \cF_j^0})=M \uno_{d_j}-\la_j \in (\R^{d_j}_{\geq 0})\ua$.

\begin{teo}\label{multi strawn} 
		Let $\delta_j=(\delta_{i,j})_{i\in\I_{d_j}}= M \,\uno_{d_j}-\tilde{\nu}_j$. Then,
	\begin{enumerate}
		\item
		$\displaystyle \min\llav{ \ \Theta(\Phi)=\sum_{j\in \I_m} \normdos{ S_{\cF_j^0} - S_{\cF_j}}:\ \Phi \in \cD{(\al,\dd)} \ }=\sum_{j\in \I_m} \|\delta_j\|^2$.

		\item If  $\Phi \in \cD{(\al,\dd)}$, then
		$$\Theta(\Phi)=\sum_{j\in \I_m} \normdos{ S_{\cF_j^0} - S_{\cF_j}}=\sum_{j=1}^m \|\delta_j\|^2 \peso{if  and only if,} \la_j(S_{\cF_j^0}-S_{\cF_j})=\delta_j\da\,.$$ 
		In this case, there exists an onb $\{v_{i,j}\}_{i\in\I_{d_j}}$ of $\C^{d_j}$ (for each $j\in\I_m$) such that
		\beq \label{eq teo rep opt str}
		S_{\cF_j^0} =\sum_{i\in\I_{d_j}} \la_{i,j}\ v_{i,j}\otimes v_{i,j} \py S_{\cF_j}=\sum_{i\in\I_{d_j}} (\la_{i,j}-\delta_{i,j})\ v_{i,j}\otimes v_{i,j}\,.
		\eeq
		\item  Local minimizers of $\Theta$ in $\aldd$ are also global minimizers.
		
	\end{enumerate}

\end{teo}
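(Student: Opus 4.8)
The plan is to reduce everything to the multi-completion problem through Proposition \ref{traduction} and then transcribe the conclusions of Theorem \ref{teo: prelims opt multicomp}. By Proposition \ref{traduction} we have $\Theta=\Psi+k$ on all of $\cD(\alpha\coma\dd)$, where $k$ is a constant and $\Psi$ is the joint potential associated with the translated initial data $\tilde\Phi^0$. Consequently the local minimizers of $\Theta$ and of $\Psi$ coincide, and a global minimizer of one is a global minimizer of the other. Item (3) is then immediate: Theorem \ref{teo: prelims opt multicomp} asserts that local minimizers of $\Psi$ are global, so the same holds verbatim for $\Theta$.

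For items (1) and (2) the key is the reflection identity $S_{\cF_j^0}-S_{\cF_j}=M\,I_{d_j}-S_{(\tilde\cF_j^0\coma\cF_j)}$ already established in the proof of Proposition \ref{traduction}. Since $x\mapsto M-x$ reverses order, this gives, for every $\Phi\in\cD(\alpha\coma\dd)$, the relation $\la\da(S_{\cF_j^0}-S_{\cF_j})=M\,\uno_{d_j}-\la\ua(S_{(\tilde\cF_j^0\coma\cF_j)})$. Theorem \ref{teo: prelims opt multicomp} characterizes a minimizer of $\Psi$ by $\la(S_{(\tilde\cF_j^0\coma\cF_j)})=\tilde\nu_j\da$ for every $j$; feeding this through the reflection and using $\delta_j=M\,\uno_{d_j}-\tilde\nu_j$ (so $\delta_j\da=M\,\uno_{d_j}-\tilde\nu_j\ua$) yields exactly $\la\da(S_{\cF_j^0}-S_{\cF_j})=\delta_j\da$, which is the spectral condition of item (2). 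The value in item (1) then drops out of the identity $\|A\|_2^2=\tr(A^2)=\sum_i\la_i(A)^2$ valid for selfadjoint $A$: at a minimizer $\Theta(\Phi)=\sum_{j\in\I_m}\|\la\da(S_{\cF_j^0}-S_{\cF_j})\|^2=\sum_{j\in\I_m}\|\delta_j\|^2$, and conversely any $\Phi$ satisfying the spectral condition attains this value and is therefore a minimizer, which closes the equivalence in item (2).

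For the geometric representation \eqref{eq teo rep opt str} I would invoke item (2) of Theorem \ref{teo: prelims opt multicomp}: at a minimizer $S_{\cF_j}$ commutes with $S_{\tilde\cF_j^0}=M\,I_{d_j}-S_{\cF_j^0}$, hence with $S_{\cF_j^0}$ itself. Thus $S_{\cF_j^0}$, $S_{\cF_j}$ and their difference are simultaneously diagonalizable in a common onb $\{v_{i,j}\}_{i\in\I_{d_j}}$ of $\C^{d_j}$; writing $S_{\cF_j^0}v_{i,j}=\la_{i,j}\,v_{i,j}$, the difference eigenvalue on $v_{i,j}$ equals $\la_{i,j}$ minus the corresponding $S_{\cF_j}$-eigenvalue, whence $S_{\cF_j}=\sum_i(\la_{i,j}-\delta_{i,j})\,v_{i,j}\otimes v_{i,j}$ once the basis is indexed so that these difference eigenvalues form $\delta_j\da$.

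The step demanding the most care — and the main obstacle — is making the two orderings compatible: the representation \eqref{eq teo rep opt str} requires a single basis that simultaneously sorts the eigenvalues $\la_{i,j}$ of $S_{\cF_j^0}$ and the difference eigenvalues $\delta_{i,j}$ in non-increasing order, and for merely commuting operators such a common permutation need not exist. It hinges on $\la(S_{\cF_j^0})$ and $\delta_j$ being comonotone. I would extract this from the explicit water-filling description $\tilde\nu_j=\max\{\cc\coma\tilde\la_j\}$ recalled after Theorem \ref{teo: prelims opt multicomp}: since $\delta_{i,j}=M-\tilde\nu_{i,j}=\min\{M-c_i\coma\la_{i,j}\}$, the added frame mass concentrates on the directions where $S_{\cF_j^0}$ is largest, so $\delta_{i,j}$ tracks $\la_{i,j}$ monotonically and the two vectors admit a common sorting permutation. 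Verifying this monotone alignment against the paper's indexing conventions is the technical heart of the argument; once it is in place, the remaining bookkeeping is routine given Theorem \ref{teo: prelims opt multicomp}.
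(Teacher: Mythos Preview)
Your proposal is correct and follows essentially the same route as the paper: reduce $\Theta$ to $\Psi$ via Proposition~\ref{traduction}, invoke Theorem~\ref{teo: prelims opt multicomp} for the spectral characterization and the local-equals-global statement, and translate back through the reflection $S_{\cF_j^0}-S_{\cF_j}=M\,I_{d_j}-S_{(\tilde\cF_j^0,\cF_j)}$. The paper's own proof is in fact terser than yours and does not explicitly address the index-pairing issue you flag for \eqref{eq teo rep opt str}; it simply appeals to commutation of $S_{\cF_j}$ and $S_{\cF_j^0}$ and asserts the simultaneous diagonalization ``as in \eqref{eq teo rep opt str}''.

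Your observation that bare commutation does not by itself force the specific alignment $\la_{i,j}\leftrightarrow\delta_{i,j}$ is well taken, and your instinct to extract it from the water-filling formula $\tilde\nu_j=\max\{(\cc)_{d_j},\tilde\la_j\}$ is the right one: that formula (equivalently the remark after Theorem~\ref{teo: prelims opt multicomp} that the optimal vectors are eigenvectors of $S_{\tilde\cF_j^0}+S_{\cF_j}$) encodes precisely the eigenspace-by-eigenspace pairing needed, so that in a basis diagonalizing $S_{\tilde\cF_j^0}$ with entries $\tilde\la_{i,j}$ the sum has entries $\tilde\nu_{i,j}$, and hence $S_{\cF_j^0}-S_{\cF_j}$ has entry $\delta_{i,j}$ on the $\la_{i,j}$-eigenvector. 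So you are not missing anything; if anything you are being more careful on this point than the paper itself.
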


\bdem
The assertions of the statement are consequences of the Proposition \ref{traduction} and Theorem \ref{teo: prelims opt multicomp}. 
 In fact,  by Proposition \ref{traduction} local minima for $\Theta$ in $\aldd$ coincide with local (and hence global) minimizers for $\Psi$, where the initial data is given by $\tilde \Phi^0$.
 
 \pausa
 In particular, $\Phi=\{\cF_j\}_{j\in \I_m}$ is a local minimizer for $\Theta$ in $\aldd$ if and only if, for each $j\in \I_m$, the spectrum of $$S_{(\tilde \cF_j^0 \coma \cF_j)}=S_{\tilde \cF_j^0}+S_{ \cF_j}=M\cdot I_{d_j}-S_{\cF_j^0}+S_{ \cF_j}$$ is given by the vector $\tilde \nu_j$, that characterize the minimizers of $\Psi$, according Theorem \ref{teo: prelims opt multicomp}. In particular, the spectra of $S_{\cF_j}-S_{\cF_j^0}$ are given by the vectors $\delta_j$, $\forall j\in \I_m$, which implies
 $$\|S_{\cF_j}-S_{\cF_j^0}\|_2^2=\|\delta_j\|^2.$$
 
 \pausa
Moreover, by the same result, $S_{\cF_j}$ commutes with $M\cdot I_{d_j}-S_{\cF_j^0}$, so   $$S_{\cF_j}\cdot S_{\cF_j^0}=S_{\cF_j^0}\cdot S_{\cF_j}.$$
Therefore, $S_{\cF_j}$ and $S_{\cF_j^0}$ can be simultaneously diagonalized as in \eqref{eq teo rep opt str}.

\edem

\section{Explicit computation of $\delta=(\delta_j)_{j\in \I_m}$}
Once we established in Prop. \ref{traduction} the link between the approximation problem with optimal  $(\alpha, \dd)$- multi-completions, we can compute the vectors $\delta_j$, $j\in \I_m$, by reinterpreting the description of the optimal spectra $\nu_j$ done in \cite{BMRS2} using the ``translated'' initial data.

\pausa
Consider the notation introduced in the previous section, so that $\la_j=(\la_{i,j})_{i\in\I_{d_j}}\in (\R^{d_j}_{\geq 0})\da$ denote the spectrum of each $S_{\cF_j^0}$ and $M=\max_{j\in \I_m} \|S_{\cF_j^0}\|$. Then, for the construction of the vectors $\tilde \nu_j$ we shall use 
\beq \label{lambda con y sin tilde} 
\tilde \la_j=M\cdot \uno_{d_j}-\la_j=(M-\la_{i,j})_{i\in\I_{d_j}}\in (\R^{d_j}_{\geq 0})\ua,
\eeq i.e. the vector of eigenvalues of $S_{\tilde \cF_j^0}=M\cdot I_{d_j}-S_{\cF_j^0}$, counted with multiplicities and arranged in non decreasing order,  along with the weights $\alpha=(\alpha_i)_{i\in \I_n}$.

\pausa
According the results shown in \cite{BMRS2}, there is a unique vector
 $\cc\in (\mathbb{R}^{d_1})\da$, computable from $\{\tilde \la_j\}_{j\in \I_m}$ and  $\alpha$, such that each spectrum $\tilde \nu_j$ of the optimal completion $S_{\tilde \cF_j^0}+S_{\cF_j}$ is described as
\beq \label{el nu como max}
\tilde \nu_j=\max ((\cc)_{d_j}, \tilde \la_j)\,,
\eeq
where the maximum is taken entry-wise and  $(\cc)_{d_j}$ is the truncation of $\cc$ on its $d_j$ first entries

\pausa
The construction of the vector $\cc$ is done with some detail in \cite{BMRS2}. Mainly, it can be characterized as the unique (up to rearrangements) vector in $\mathbb{R}^{d_1}$ such that,
if 
\beq \label{el c} 
\cc=(c_1\uno_{s_1}, c_2\uno_{s_2},\cdots, c_p\uno_{s_p})\,,
\eeq
where $c_1>c_2>\cdots >c_p>0$ and $\sum_{k=1}^p s_k=d_1$,
then each $c_k$  and $s_k$ satisfy
\beq \label{la constante de b trasladada 1}(\alpha_i)_{i=i_{k-1}+1}^{i_k}\prec \paren{\sum_{j:\, i\leq d_j} (c_k-\tilde \la_{i,j})^+}_{i=i_{k-1}+1}^{i_k}=\paren{\sum_{j:\, i\leq d_j} ( \la_{i,j}-(M-c_k))^+}_{i=i_{k-1}+1}^{i_k}
\eeq
and
\beq \label{la constante de b trasladada 2} (\alpha_i)_{i=i_{p-1}+1}^n\prec \paren{\sum_{j:\, i\leq d_j} (c_k-\tilde \la_{i,j})^+}_{i=i_{p-1}+1}^{d_1}=\paren{\sum_{j:\, i\leq d_j} (\la_{i,j}-(M-c_k))^+}_{i=i_{p-1}+1}^{d_1}
\eeq
\noi
for $i_0=0$ and $i_j=\sum_{i=1}^j s_i$.

 \pausa
 Since for each $j\in \I_m$,  $\delta_j=M\uno_{d_j}-\tilde \nu_j$, where $\tilde \nu_j$ can be constructed as before, we are able to propose an algorithm that computes $\delta_j$ from the previous assertions.
 
 \pausa
First, let $\bb$ be the vector in $(\mathbb{R}^{d_1})\ua$ defined as $\bb=M\cdot \uno_{d_1}-\cc$. Then, by equations \eqref{lambda con y sin tilde}, \eqref{el nu como max}  and the characterization of $\delta_j$:
\beq \label{los nus}
\delta_j=\min ( (\bb)_{d_j}, \la_j)\,.
\eeq
From the equations \eqref{la constante de b trasladada 1} and \eqref{la constante de b trasladada 2} that define  the vector $\cc$  we  construct
$$\bb=(b_1\uno_{s_1},b_2\uno_{s_2},\cdots, b_p\uno_{s_p})\in (\mathbb{R}^{d_1})\ua\,,
$$
inductively as follows (as before, we let $i_0=0$ and $i_j=\sum_{k=1}^j s_k$): 

	\begin{fed}\rm\label{los ind y las cons}

Let us suppose that we have found the indices $i_0=0<i_1<\cdots <i_k$ (therefore we have $s_1=i_1$, $s_j=i_j-i_{j-1}$, for $j=1,\ldots ,k$) and the constants $b_1<b_2<\cdots <b_k$.

\pausa
Then, we define $i_{k+1}$ and $b_{k+1}$ as:
	\begin{equation}\label{el sk}
	i_{k+1}:=\max\llav{i \, :\, i_k+1\leq i \leq d_1: \, (\al_i)_{i=i_k+1}^{i^*}\prec\paren{\beta_{i,k+1}}^{i}_{i=i_{k}+1}}\quad\text{for}\,\,\, 0\leq k\leq p-1\,, 
\end{equation}
where $i^*$ and $\beta_{i,k+1}$ are determined from the following cases:
	
	\pausa
	Case 1)	If $i\in\I_{d_1-1}$ then $i^*=i$, and
	$$
	\paren{\beta_{i,k+1}}^{i}_{i=i_{k}+1}=\paren{\sum_{j:\, i\leq d_j} (\la_{i,j}-b_{k+1,i}^*)^+}_{i=i_{k}+1}^{i}
	$$ 
	and $b_{k+1,i}^*$, for $i\geq i_k+1$, is the unique solution of the equation 

	$$
	\sum_{i=i_k+1}^{i} \al_i=\sum_{i=i_k+1}^i\,\,
	\sum_{j:\,  i\leq d_j}(\la_{i,j}-x)^+\,.
	$$

	\pausa
	Case 2) If $i=d_1$, then $i^*=n$, and  
	$$
	\paren{\beta_{i,k+1}}^{d_1}_{{{i=i_{p-1}+1}}}=\paren{\sum_{j:\, i\leq d_j} (\la_{i,j}-b_{k+1,d_1}^*)^+}_{i=i_{p-1}+1}^{d_1}
	$$ 
	and $b_{k+1,d_1}^*$, is the unique solution of the equation 
	
	$$
	\sum_{i=i_{p-1}+1}^{n} \al_i=\sum_{i=i_{p-1}+1}^{d_1}
\,\,
	\sum_{j:\, i\leq d_j}(\la_{i,j}-x)^+\,.
	$$

We denote $b_{k+1}=b^*_{k+1,i_{k+1}}$	for $k\in \I_{p-1}$.
\EOE
\end{fed}

\pausa
This algorithm that produces the vector $\bb$, which generates $\delta=\{\delta_j\}_{j\in \I_m}$ is deduced from the characterization of $\cc$ given in \cite{BMRS2}. We omit the proof of the correct ordering in the entries of $\bb$ (that is, that the $b_{k+1}$ produced in this way is such that $b_k<b_{k+1}$)  and of the uniqueness of $\bb$ since they follow directly from the results proved in \cite{BMRS2}.
\begin{rem}\rm
Note that the  construction of $\delta$ proposed in Definition \ref{los ind y las cons} does not depend on the translation parameter $M$. This means that the algorithm that produces $\delta$ only requires as initial data the spectra $\la_j$ of the frame operators $S_{\cF_j^0}$ (with multiplicities and arranged in non decreasing order)  and the set of weights $\alpha=(\alpha_i)_{i\in \I_n}$, as expected.

\end{rem}

\begin{exa}
	In the following example, we implement the described algorithm for the same initial data considered in \cite[Example 5.5]{BMRS2}.
	
	\pausa
	That is: for  $d=(7,5,3)$ consider $\Phi^0=\{\cF_j^{0}\}_{j\in\I_3}$ be given
	in matrix form by

	\[\cF_1^{0}=\left[\begin{array}{rrrr}
		0.3066&    1.6919&   -1.14&    0.0488\\
		0.9339&   -0.4353&   -0.2197&    0.2354\\
		-1.8151&    0.8134&    0.3742&    0.2428\\
		1.7690&    1.0168&    0.8745&   -0.045\\
		-0.4706&    0.7223&    0.8595&    0.0609\\
		1.1678&   -0.0164&    0.0839&    0.2206\\
		-0.1574&    0.48&    0.042&   -0.3589
	\end{array}
	\right]
	\]
	\[\cF_2^{0}=\left[\begin{array}{rrr}
		-2.723&   -0.068&     -0.5242\\
		-2.2341&   -0.5975&    0.2401\\
		-1.5660&    0.7992&    0.0219\\
		2.2048&   -0.1835&   -0.4038\\
		0.5298&    0.2569&    0.0631
	\end{array}
	\right]
	\]
	and
	\[\cF_3^{0}=\left[\begin{array}{rrr}
		-0.8048&   -0.9958&   -0.1026\\
		1.0153&   -0.5127&   -0.4653\\
		0.5669&   -0.4955&    0.6877
	\end{array}
	\right]
	\]
	In this case, the spectra of $S_{\cF^0_j}$ are: 
	\[ \la_1=(9,\, 5.5,\, 3,\, 0.3,\, 0,\, 0,\, 0), \qquad \la_2=(20,\,1.1,\,0.5,\,0,\,0) \quad \text{ and } \; \la_3=(2,\,1.5,\,0.7).\]
	Consider the set of weights $\al=(40,\;   35,\;    9,\;    5,\;    4.5,\;    3,\;    2.4,\;   2)$. An implementation of the previously discussed algorithm produces
	\[\bb=20\cdot \uno_7-\cc=(-5.9833,\,   -5.9833,\,   -2.3778,\,   -2.3778,\,   -2.3778,\,   -2.3778,\,   -2.3778),\]
	so $\delta_1=\bb$,\quad  $\delta_2=(\bb)_5$\;  and \;  $\delta_3=(\bb)_3$. Thus, the minimal value for the multi-approximation is
	\[\|\delta_1\|^2+\|\delta_2\|^2+\|\delta_3\|^2=265.685\,.\]
	
	\pausa
	Moreover, by applying well-known algorithms that allow the construction of matrices with prescribed  spectra and  column norms (see for example \cite{Dill}), we obtain the following  solution to the multi-approximation problem:
	\[\tilde \cF_1=\left[\begin{array}{rrrrrrrr}
		-1.8371&    1.6608&    1.2662&    0.5185&    0.3026&    0.3396&    0.1226&    0.5687\\
		-0.5473&   -1.2442&    0.7725&    0.2088&   -0.5880&   -1.0094&   -0.4046&   -0.0201\\
		1.0932&    2.3814& -0.0088&   -0.7383&   -0.8395&   -0.1102&   -0.3142&   -0.2439\\
		-2.7000&   -0.2801&   -0.9373&   -0.8292&   -0.2106&   -0.4169&    0.2434&   -0.1652\\
		-0.1785&   1.1942&   -1.1371&    0.9572&   -0.5076&   -0.4684&    0.0565&   -0.3832\\
		-1.1612&   -0.9604&   -0.0092&    0.1824&   -0.6634&    0.9483&   -0.6628&   -0.5109\\
		-0.2751&    0.6683&   -0.0512&   -0.0647&    1.0101&   -0.3055&   -0.8896&   -0.7003
	\end{array}
	\right]
	\]
	
	\bigskip
	
	\[\tilde \cF_2=\left[\begin{array}{rrrrrrrr}
		-2.3145&   -2.0750&   -1.2187&    0.0102&   -0.2369&   -0.4077&   -0.1708&   -0.1559\\
		-1.0396&   -2.7753&    0.4457&    0.3734&    0.5631&    0.3362&    0.4060&    0.3706\\
		-2.6610&   0.4670&    0.04&   -0.2241&    0.5553&    0.2283&    0.4003&    0.3654\\
		2.2525&    1.2076&   -0.9958&   -0.1239&    0.6724&    0.0069&    0.4848&    0.4425\\
		0.0638&    0.8863&   -0.1217&    1.4798&   -0.0001&   -0.0314&   -0.0001&   0
	\end{array}
	\right]
	\]
	and
	\[\tilde \cF_3=\left[\begin{array}{rrrrrrrr}
		-0.1356&    2.7412&   -0.1706&   -0.04&   -0.0834&   -0.0451&   -0.0601&   -0.0549\\
		-2.2991&   -0.3745&   -0.7734&   -0.1812&   -0.3781&   -0.2047&   -0.2726&   -0.2488\\
		-1.5759&    0.1557&    1.1430&    0.2679&    0.5587&    0.3025&    0.4028&    0.3677
	\end{array}
	\right]
	\]
\end{exa}

\section{An application to a distance problem for G-frames} 

In the previous section we generalized the 
approximation problem studied in \cite{MRiS1}, to the  setting of $(\alpha,\dd)$-designs, that is, as a simultaneous approximation to a family of semi-definite positive matrices with some structured matrices, that come from these $(\alpha,\dd)$-designs.

	\pausa
	In this section we study another natural generalization, to the set of G-frames, that was posed and studied in \cite{MLX}. So, we briefly recall the concept of G-frames, introduced by W. Sun in \cite{sun0}.

	\pausa
	A family $\cF=\{T_i\}_{i\in I}$ of linear bounded operators $T_i$ from $\C^d$ to an analysis space $\C^n$ is a G-frame for $\C^d$ if there exist constants $a,b>0$ such that
$$a\|x\|^2\leq \sum_{i\in I} \, \|T_ix\|^2\leq b\|x\|^2, $$
for every $x\in \C^d$. If only the upper inequality holds, we say that $\cF$ is a G-Bessel sequence for $\C^d$.

	\pausa
Given a G-Bessel sequence  $\cF=\{T_i\}_{i\in I}$, its frame operator $S_\cF$ is defined as 
$$S_\cF=\sum_{i\in I} T_i^*T_i.$$

	\pausa
Let $\alpha=(\alpha_i)_{i\in I_m}$ be a non increasing finite sequence of positive weights. Consider the set
$$\Lambda_\alpha=\{\cF=\{T_i\}_{i\in \I_m}: \cF \; \text{ is a G-Bessel sequence for }\mathcal{H}\; \text{  with  }\; \|T_i\|_2^2=\alpha_i\}.$$

	\pausa
Let $A$ be a positive semi definite operator of $\mathcal{H}$. Our goal is to study the following approximation problem:

	\pausa
Compute 
\begin{equation}\label{problema G-frames}
\min_{\cF\in \Lambda_\alpha} \|A-S_\cF\|_2,
\end{equation}
and characterize the G-Bessel sequences that attain the
minimum distance.

	\pausa
We shall see that this problem can be treated as a particular case of the problem considered in \cite{MRiS1, MRiS}.

	\pausa
First, we need the following characterization of the frame operators of elements in $\Lambda_\alpha$. Recall that the dimension of the  analysis space that we are considering is $n$.
\begin{pro}\label{caracterizacion de operadores en Lambda alfa}
	Let $S\in \cM_{d}(\C)^+$ with eigenvalues given by $\la \in \R_{\geq 0}^d$ and let  $\alpha=(\alpha_i)_{i\in \I_m}\in (\R_{>0}^m)\da$. Then, there exists $\cF\in \Lambda_\alpha$ with $S_\cF=S$ if and only if
	\beq \label{mayo larga} (\frac{\alpha_1}{n}\uno_n\coma\frac{\alpha_2}{n}\uno_n\coma\cdots\coma \frac{\alpha_m}{n}\uno_n)\prec \la \,.
	\eeq 
\end{pro}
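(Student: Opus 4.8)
The plan is to reduce this G-frame realizability question to the ordinary frame design problem solved by Theorem~\ref{teo SH para marcos}, via a row decomposition of the operators $T_i$, and then to pin down the majorization-minimal admissible squared-norm distribution.

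First I would decompose each $T_i\in\cM_{n\times d}(\C)$ into its rows: writing $\phi_{i,l}\in\C^d$ for the conjugate transpose of the $l$-th row of $T_i$, a direct computation gives $(T_iz)_l=\langle z,\phi_{i,l}\rangle$, hence $T_i^*T_i=\sum_{l\in\I_n}\phi_{i,l}\otimes\phi_{i,l}$ and $\|T_i\|_2^2=\tr(T_i^*T_i)=\sum_{l\in\I_n}\|\phi_{i,l}\|^2$. Therefore $S_\cF=\sum_iT_i^*T_i=\sum_{i\in\I_m,\,l\in\I_n}\phi_{i,l}\otimes\phi_{i,l}$ is precisely the frame operator of the ordinary family $\{\phi_{i,l}\}_{i\in\I_m,\,l\in\I_n}$ of $mn$ vectors in $\C^d$, and the normalization $\|T_i\|_2^2=\alpha_i$ becomes the condition that the squared norms $a_{i,l}=\|\phi_{i,l}\|^2$ have prescribed group sums $\sum_{l\in\I_n}a_{i,l}=\alpha_i$. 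Thus the existence of $\cF\in\Lambda_\alpha$ with $S_\cF=S$ is equivalent to the existence of a vector $a=(a_{i,l})\in\R_{\geq0}^{mn}$ with these group sums that arises as the squared-norm vector of some frame with frame operator $S$.

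The heart of the argument is the elementary observation that $x^\ast=(\tfrac{\alpha_1}{n}\uno_n,\dots,\tfrac{\alpha_m}{n}\uno_n)$ is the minimum, in the majorization order, among all nonnegative vectors with group sums $\alpha_i$. To see this I would introduce the block-diagonal matrix $D=\bigoplus_{i\in\I_m}\tfrac1n\,J_n$, where $J_n$ is the $n\times n$ all-ones matrix; $D$ is doubly stochastic and $Da=x^\ast$ for every admissible $a$, so $x^\ast\prec a$ by the Hardy--Littlewood--P\'olya characterization of majorization. The equivalence then follows by transitivity. For the forward implication, given $\cF\in\Lambda_\alpha$ with $S_\cF=S$, the squared-norm vector $a$ of $\{\phi_{i,l}\}$ satisfies $a\prec\la$ (apply Schur's inequality to the diagonal of the Gram matrix $V^*V$, where $V$ is the synthesis operator of $\{\phi_{i,l}\}$; its eigenvalues are those of $S$, namely $\la$ padded with zeros, and no positivity of the entries of $a$ is required); since $x^\ast\prec a$, transitivity yields $x^\ast\prec\la$. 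Conversely, if $x^\ast\prec\la$, I would apply Theorem~\ref{teo SH para marcos} with $k=mn$ and $\ca=x^\ast$ (whose entries are all positive) to obtain a frame $\{\phi_{i,l}\}$ with frame operator $S$ and $\|\phi_{i,l}\|^2=\alpha_i/n$, and then reassemble the $\phi_{i,l}$ as the rows of operators $T_i\in\cM_{n\times d}(\C)$; the resulting family $\{T_i\}$ is automatically G-Bessel, satisfies $\|T_i\|_2^2=\alpha_i$, and has $S_\cF=S$.

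The point requiring care is the size mismatch between the length-$mn$ vector $x^\ast$ and the length-$d$ vector $\la$, so I would work throughout with the extended notion of majorization from Section~2. The trace identity $\sum_{i\in\I_m}\alpha_i=\tr S=\sum_{i\in\I_d}\la_i$, forced by $S=\sum_iT_i^*T_i$, makes the comparison well posed and, when $mn<d$, automatically encodes the rank constraint $\rk S\leq mn$. Verifying that transitivity of majorization survives these length changes is the only mildly technical step, and it follows directly from the definitions recalled in Section~2.
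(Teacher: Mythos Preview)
Your proof is correct and follows the same overall strategy as the paper: reduce the G-frame realizability question to the ordinary frame design problem of Theorem~\ref{teo SH para marcos} by decomposing each $T_i$ into vectors (you take the rows $\phi_{i,l}$, the paper writes $f_{i,j}=T_i^*b_j$ and packages the assembly via partial isometries $W_i$, but these are the same construction). The converse direction is identical in both.

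The one noteworthy difference is in the forward direction (existence $\Rightarrow$ majorization). You argue directly: the squared-norm vector $a$ satisfies $a\prec\la$ by Schur's inequality applied to the Gram matrix, and the block doubly-stochastic averaging $D=\bigoplus_i\tfrac{1}{n}J_n$ gives $x^\ast=Da\prec a$, so transitivity yields $x^\ast\prec\la$. The paper instead takes a detour: from $\tfrac{\alpha_i}{n}\uno_n\prec\la(T_i^*T_i)$ it invokes Schur--Horn \emph{backwards} to build auxiliary equal-norm sequences $G_i$ with $S_{G_i}=T_i^*T_i$, juxtaposes them into a single sequence $\cG$ with $S_\cG=S$, and then applies Schur--Horn \emph{forwards} to $\cG$ to conclude $x^\ast\prec\la$. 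Your route is shorter and avoids this intermediate construction; the paper's route has the mild advantage of staying entirely within the language of Theorem~\ref{teo SH para marcos} without appealing separately to the Hardy--Littlewood--P\'olya characterization.
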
 
\begin{proof}
	On one direction, if \eqref{mayo larga} holds, the Schur-Horn theorem implies the existence of a (vector) frame $\cF_{vec}=\{f_j\}_{j\in \I_{nm}}$ such that $\|f_{j}\|^2=\frac{\alpha_i}{n}$, for $(i-1)n+1\leq j\leq in$, and whose frame operator $S_{\cF_{vec}}$  is $S$. Let $T^*\in L(\C^{nm},\mathcal{H})$ be the bounded linear operator such that $T^*e_i=f_i$, where $\{e_j\}_{j\in \I_{nm}}$ is the standard orthonormal basis in $\C^{nm}$.  
		
	\pausa	
Consider a (fixed) orthonormal basis $\{b_j\}_{j\in \I_n}$ for $\mathcal{K}$.  Let $W_i\in L(\C^{nm},\mathcal{K})$ be the partial isometry defined such that
$$W_ie_{(i-1)n+j}=b_j, \; \text{ for } j=1,\ldots,n\quad \text{ and } \quad 	W_ie_k=0\;  \text{ otherwise.}$$
That is, it implies that $W_iW_i^*=I_n$ and $W_i^*W_i=P_i$, where $I_n$ is the identity in $\cK$ and $P_i$ is the diagonal projection of $\C^{mn}$ onto the subspace generated by
$$J_i=\{e_{(i-1)n+j}\,:\, j=1,\ldots,n\}.$$
Thus, $\sum_{i\in \I_m}P_i=I_{mn}$, the identity in $\C^{mn}$.

\pausa	Notice that, if we set $\cF=\{T_i\}_{i\in \I_m}$ where $T_i\in L(\mathcal{H},\mathcal{K})$ is defined such that $T_i=W_iT$, for $i\in \I_m$, then 
	$\cF\in \Lambda_\alpha$ and $S_\cF=S_{\cF_{vec}}=S$.
	
\pausa Indeed, 
$$S_{\cF}=\sum_{i\in \I_m}T_i^*T_i=\sum_{i\in \I_m}T^*W_i^*W_iT=T^*\left(\sum_{i\in \I_m}W_i^*W_i\right)T=T^*\left(\sum_{i\in \I_m}P_i\right)T=T^*T=S_{\cF_{vec}}$$
and
$$\|T_i\|_2^2=\tr(T_i^*T_i)=\tr(TT^*P_i)=\sum_{j\in \I_{mn}}\|T^*P_ie_j\|^2=\sum_{j\in \I_{n}}\|f_{(i-1)n+j}\|^2=\alpha_i.$$
	
		\pausa
	On the other side, suppose that $\cF=\{T_i\}_{i\in \I_m}$ is a G-Bessel sequence in $\Lambda_\alpha$ such that  $S_\cF=S$. Then,  
	$$\alpha_i=\|T_i\|_2^2=\tr (T_i^*T_i)\qquad \text{for all}\quad i\in\I_m\,,$$
	in particular, if we denote $f_{ij}=T^*_ib_j$, we have
	$$\frac{\alpha_i}{n}\uno_n\prec (\|f_{i1}\|^2\coma \|f_{i2}\|^2\coma \cdots\coma \|f_{in}\|^2)\prec \la(T_i^*T_i).$$
	Here, the majorization on the left holds since it is easy to see that for every $x\in  \R_{\geq 0}^d$, $(\frac{\tr{x}}{d})\uno_d\prec x$, while the comparison on the right is due to Theorem \ref{teo SH para marcos}.
	
	\noi Again by use of Schur-Horn theorem we deduce from the previous majorization relationship that there is, for each $i\in \I_m$, a Bessel sequence $G_i=\{g_{ij}\}_{j\in \I_d}$ for $\mathcal{H}$ such that $\|g_{ij}\|^2=\frac{\alpha_i}{d}$ and  such that $S_{G_i}=T_i^*T_i$.
	
\pausa Define the linear operator $T^*\in L(\C^{mn},\mathcal{H})$ by $T^*b_{(i-1)n+j}=g_{ij}$, for $i\in \I_m$ and $j\in \I_n$.

\pausa In particular, $T^*P_iT=T_i^*T_i$, using the previous definition for the orthogonal projections $P_i$. Then, 
$$T^*T=T^*\left(\sum_{i\in \I_m}P_i\right)T	=\sum_{i\in \I_m}T_i^*T_i=S.$$

		\pausa
	Therefore, the sequence $\cG=\{G_i\}_{i\in \I_m}$, constructed by juxtaposition is a Bessel sequence for $\mathcal{H}$, with synthesis operator $T^*\in L(\C^{mn},\mathcal{H})$ and frame operator $S_{\cG}=T^*T=S$.
	
\pausa	Finally, since the squared norms of the elements in $\cG$ are given by the vector $(\frac{\alpha_1}{n}\uno_n\coma\frac{\alpha_2}{n}\uno_n\coma\cdots\coma \frac{\alpha_m}{n}\uno_n)$, by Schur-Horn theorem we conclude that
	$$(\frac{\alpha_1}{n}\uno_n\coma\frac{\alpha_2}{n}\uno_n\coma\cdots\coma \frac{\alpha_m}{n}\uno_n)\prec \la(S_\cG)=\la.$$
	
\end{proof}

	\pausa
We are now in a position to prove the existence and characterization of approximants of the problem posed in Eq. \eqref{problema G-frames}.

\begin{teo}\label{g-frame global} 
	Let $\alpha=(\alpha_j)_{j\in\I_m}(\R_{\geq 0}^m)\da$, and consider the set of G-frames $\Lambda_\alpha$ as before. Let $A\in \cM_{d}(\C)^+$. Then, there exists
	$\cF^{\rm op}=\{T_j^{\rm op}\}_{j\in \I_m}\in \Lambda_\alpha$
	such that 
	\beq
	\|A- S_{\cF^{\rm op}}\|_2\leq \|A- S_{\cF}\|_2\quad\text{for all}\quad \cF\in \Lambda_\alpha\,.
	\eeq
	Moreover, the minimal distance (and the approximants)  can be computed using the spectrum of $A$ and the weights  $(\frac{\alpha_1}{n}\uno_n\coma\frac{\alpha_2}{n}\uno_n\coma\cdots\coma \frac{\alpha_m}{n}\uno_n)$ as the initial data for the classical approximation problem.
\end{teo}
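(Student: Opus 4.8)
The plan is to reduce the G-frame approximation problem in Eq.~\eqref{problema G-frames} to the classical (vector) frame operator distance problem solved in \cite{MRiS1}, using Proposition~\ref{caracterizacion de operadores en Lambda alfa} as the bridge. The central observation is that, by Proposition~\ref{caracterizacion de operadores en Lambda alfa}, the set of attainable frame operators $\{S_\cF : \cF\in \Lambda_\alpha\}$ coincides \emph{exactly} with the set of positive semidefinite matrices $S\in \cM_d(\C)^+$ whose spectrum $\la(S)$ majorizes the fixed vector
$$w:=\paren{\tfrac{\alpha_1}{n}\uno_n\coma \tfrac{\alpha_2}{n}\uno_n\coma \cdots\coma \tfrac{\alpha_m}{n}\uno_n}\in (\R_{\geq 0}^{mn})\da\,.$$
Since $\|A-S_\cF\|_2$ depends on $\cF$ only through $S_\cF$, minimizing over $\Lambda_\alpha$ is equivalent to minimizing $\|A-S\|_2$ over the set of positive operators with $w\prec \la(S)$. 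But this latter set is precisely the collection of frame operators of vector Bessel sequences $\cG=\{g_j\}_{j\in\I_{mn}}\in(\C^d)^{mn}$ with prescribed squared norms $(\|g_j\|^2)_{j}=w$ --- that is, of elements of the $w$-torus $\mathcal B_{w\coma d}$ --- again by the Schur--Horn theorem (Theorem~\ref{teo SH para marcos}). Hence the problem is literally Strawn's problem (the $m=1$ case of Definition~\ref{defi gen fod}) with weight vector $w$ and target operator $A$.

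First I would make this identification precise: given the target $A$, invoke the $m=1$ case of Theorem~\ref{multi strawn} (equivalently, the main result of \cite{MRiS1}) applied to the weight vector $w$ and the single positive operator $A$. This yields a vector Bessel sequence $\cG^{\rm op}=\{g_j^{\rm op}\}_{j\in\I_{mn}}\in \mathcal B_{w\coma d}$ whose frame operator $S^{\rm op}=S_{\cG^{\rm op}}$ minimizes $\|A-S\|_2$ over $\mathcal B_{w\coma d}$, together with the explicit spectral description $\la(A-S^{\rm op})=\delta\da$ for the appropriate vector $\delta$ computed from $\la(A)$ and $w$. The existence of the minimizer and the value of the minimal distance follow immediately.

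Next I would transport this optimal vector frame back into $\Lambda_\alpha$ using the explicit construction in the first half of the proof of Proposition~\ref{caracterizacion de operadores en Lambda alfa}. Concretely, writing $T^*\in L(\C^{mn}\coma \cH)$ for the synthesis operator of $\cG^{\rm op}$ and $W_i\in L(\C^{mn}\coma \cK)$ for the partial isometries defined there (so that $W_i^*W_i=P_i$ and $\sum_{i\in\I_m}P_i=I_{mn}$), I set $T_i^{\rm op}:=W_i T$ and $\cF^{\rm op}:=\{T_i^{\rm op}\}_{i\in\I_m}$. The same computation as in that proof gives $S_{\cF^{\rm op}}=T^*T=S^{\rm op}$ and $\|T_i^{\rm op}\|_2^2=\alpha_i$, so $\cF^{\rm op}\in\Lambda_\alpha$ and $\|A-S_{\cF^{\rm op}}\|_2=\|A-S^{\rm op}\|_2$ is the global minimum. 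Combined with the forward direction of Proposition~\ref{caracterizacion de operadores en Lambda alfa}, which guarantees that \emph{every} $S_\cF$ with $\cF\in\Lambda_\alpha$ arises as a frame operator of some sequence in $\mathcal B_{w\coma d}$ and hence cannot beat $S^{\rm op}$, this establishes optimality, and the final sentence about computability follows since $\delta$ (and thus the minimal distance) is obtained from $\la(A)$ and $w$ via the algorithm of Section~4.

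The main obstacle is ensuring the two-sided equivalence ``$S_\cF$ for $\cF\in\Lambda_\alpha$'' $\Longleftrightarrow$ ``$S_\cG$ for $\cG\in\mathcal B_{w\coma d}$'' is genuinely an equality of sets rather than merely one inclusion; this is exactly the content of Proposition~\ref{caracterizacion de operadores en Lambda alfa} (both directions), so the real work has already been done there, and the remaining step is bookkeeping to confirm that the norm objective is unchanged under the correspondence $\cF\leftrightarrow\cG\leftrightarrow S$ and that the weight vector $w$ used in \cite{MRiS1} matches the block structure produced by the $W_i$'s.
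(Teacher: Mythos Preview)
Your proposal is correct and follows essentially the same approach as the paper: both use Proposition~\ref{caracterizacion de operadores en Lambda alfa} to identify the set of attainable frame operators $\{S_\cF:\cF\in\Lambda_\alpha\}$ with the set of frame operators of vector Bessel sequences in $\mathcal B_{w\coma d}$, apply the classical $m=1$ result from \cite{MRiS1} (equivalently Theorem~\ref{multi strawn}) to obtain the optimal vector sequence, and then transport back to $\Lambda_\alpha$ via the explicit $T_i=W_iT$ construction from the proof of Proposition~\ref{caracterizacion de operadores en Lambda alfa}. Your write-up is in fact considerably more explicit than the paper's own proof, which leaves most of these steps to the reader.
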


\begin{proof}
	Given a $\cF\in \Lambda_\alpha$, by Proposition \ref{caracterizacion de operadores en Lambda alfa} there is a Bessel sequence $\cF_{vec}$ for $\mathcal{H}$ such that $S_\cF=S_{\cF_{vec}}$ and such that the norms of the vectors in $\cF_{vec}$ are given by the weights:
	$$(\frac{\alpha_1}{n}\uno_n\coma\frac{\alpha_2}{n}\uno_n\coma\cdots\coma \frac{\alpha_m}{n}\uno_n)\,.$$
	In particular, the computation of the distance can be done using the results in \cite{MRiS1} (or the results in previous section, for the particular case of $\dd=d$). Notice that, as it was done in the proof of the Proposition \ref{caracterizacion de operadores en Lambda alfa}, optimal G-frames can be constructed from optimal vector frames. 
\end{proof}
\begin{rem} The problem considered in \cite{MLX}, is actually solved in terms of unitarily invariant norms (briefly uin).
	Recall that a norm $N(\cdot)$ in $\mat$ is \textit{unitarily invariant} if 
	$$ 
	\nui{UAV}=\nui{A} \peso{for every} A\in\mat \py U,\,V\in\matud\ ,
	$$
	and $N(\cdot)$ is \textit{strictly convex} if its restriction to diagonal matrices is a strictly convex  norm in $\C^d$. 
	Examples of uin are the spectral norm  and the $p$-norms, for $p\geq 1$
	(strictly convex if $p>1$). Note that, in particular, when $p=2$, we get the Frobenius norm. Then, the problem posed in \cite{MLX} is:
	
	\pausa Given $N(\cdot)$ an strictly convex uin in $\mat$, $\cH=\C^d$, $\cK=\C^n$, $\al=(\al_i)_{i\in\I_m}$ and $A\in\matpos$, compute 
	$$\min_{\cF\in \Lambda_\alpha} N\paren{A-S_\cF},$$
	and characterize the G-Bessel sequences that reach the
	minimum distance. Following the same steps as for the Frobenius norm, and applying Theorem 4.1 in \cite{MRiS}, we get the following generalization of Theorem \ref{g-frame global}, since the minimizers do not depend on the  unitary invariant norm chosen. 
\EOE	
\end{rem}
\begin{teo}\label{g-frame global nui} 
	Let $\alpha=(\alpha_j)_{j\in\I_m}(\R_{\geq 0}^m)\da$, $N(\cdot)$ an strictly convex uin in $\mat$ and consider the set of G-frames $\Lambda_\alpha$ as before. Let $A\in \cM_{d}(\C)^+$. Then, there exists
	$\cF^{\rm op}=\{T_j^{\rm op}\}_{j\in \I_m}\in \Lambda_\alpha$
	such that 
	\beq
	N(A- S_{\cF^{\rm op}})\leq N(A- S_{\cF})\quad\text{for all}\quad \cF\in \Lambda_\alpha\,.
	\eeq
	Moreover, the minimal distance (and the approximants)  can be computed using the spectrum of $A$ and the weights  $(\frac{\alpha_1}{n}\uno_n\coma\frac{\alpha_2}{n}\uno_n\coma\cdots\coma \frac{\alpha_m}{n}\uno_n)$ as the initial data for the classical approximation problem. Even more, the best approximants do not depend on the choice of the strictly convex uin.
	\qed
\end{teo}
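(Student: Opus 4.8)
The plan is to mirror the proof of Theorem \ref{g-frame global}, replacing the Frobenius-specific ingredient with its strictly convex uin counterpart, namely Theorem 4.1 of \cite{MRiS}.

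First I would recall, via Proposition \ref{caracterizacion de operadores en Lambda alfa}, that a positive operator $S$ is the frame operator of some $\cF \in \Lambda_\alpha$ precisely when $(\frac{\alpha_1}{n}\uno_n \coma \cdots \coma \frac{\alpha_m}{n}\uno_n) \prec \la(S)$. By the Schur--Horn theorem (Theorem \ref{teo SH para marcos}), this is exactly the condition characterizing the frame operators of vector Bessel sequences whose prescribed squared norms are the weights $\beta = (\frac{\alpha_1}{n}\uno_n \coma \cdots \coma \frac{\alpha_m}{n}\uno_n)$. Hence the set of achievable frame operators for the G-frame problem and the set for the classical vector problem with norm data $\beta$ coincide, and $N(A - S_\cF) = N(A - S_{\cF_{vec}})$ whenever $S_\cF = S_{\cF_{vec}}$. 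This identifies the two minimization problems.

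Next I would apply Theorem 4.1 of \cite{MRiS} to the classical problem $\min N(A - S_{\cF_{vec}})$ taken over vector Bessel sequences with squared norms $\beta$, for the given strictly convex uin $N(\cdot)$. This furnishes an optimal vector frame $\cF_{vec}^{\rm op}$, describes its optimal frame operator spectrally from the eigenvalues of $A$ and the data $\beta$, and --- the decisive point --- establishes that the optimal frame operator is independent of the particular strictly convex uin. To transfer the solution back, I would use the partial-isometry construction from the proof of Proposition \ref{caracterizacion de operadores en Lambda alfa}: setting $T_i^{\rm op} = W_i T$, where $T^*$ synthesizes $\cF_{vec}^{\rm op}$ and the $W_i$ split $\C^{mn}$ into the blocks $J_i$, yields a G-frame $\cF^{\rm op} = \{T_i^{\rm op}\}_{i\in\I_m} \in \Lambda_\alpha$ with $S_{\cF^{\rm op}} = S_{\cF_{vec}^{\rm op}}$, which is therefore optimal for the G-frame problem; the independence from the uin is inherited directly.

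I expect the only point requiring genuine care to be the verification that the reduction of Proposition \ref{caracterizacion de operadores en Lambda alfa} is a full equivalence at the level of achievable frame operators --- in both directions --- so that optimality, and not merely realizability, transfers faithfully between the two problems. Once this is secured, no norm-dependent argument is needed and the theorem follows at once from \cite{MRiS}.
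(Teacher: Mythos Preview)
Your proposal is correct and matches the paper's own argument essentially step for step: the paper presents Theorem \ref{g-frame global nui} with a \qed and no separate proof, indicating in the preceding remark that one simply repeats the proof of Theorem \ref{g-frame global} (reduce to the vector problem via Proposition \ref{caracterizacion de operadores en Lambda alfa}, then invoke Theorem 4.1 of \cite{MRiS} in place of the Frobenius-specific result). Your write-up is in fact more explicit than the paper about the two-way correspondence of achievable frame operators and the transfer of optimality, which is exactly the right point to check.
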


\end{document}